\documentclass[12pt,a4paper]{amsart}
\usepackage{amsmath,amssymb,amsthm,enumerate,a4wide,}
\usepackage[pagebackref,colorlinks,linkcolor=blue,citecolor=red,
urlcolor=blue,hypertexnames=true]{hyperref}
\usepackage{amsrefs}

\title{The semisimple conjugacy classes\\ in the symplectic groups}
\author{G.E. Wall}
\address{Emeritus Professor G.E. Wall, School of Mathematics and Statistics,
The University of Sydney, NSW 2006, Australia.}
\date{\today}

\theoremstyle{plain}
\newtheorem{thm}{Theorem}[section]
\newtheorem{cor}[thm]{Corollary}
\newtheorem{lemma}[thm]{Lemma}

\theoremstyle{definition}
\newtheorem{defn}[thm]{Definition}
\newtheorem{notation}[thm]{Notation}
\newtheorem{assump}[thm]{Assumption}


\newtheorem{obs}[thm]{Observation}

\DeclareMathOperator{\Sp}{Sp}
\newcommand{\spl}[2]{\Sp_{#1}(#2)}
\DeclareMathOperator{\GL}{GL}
\newcommand{\gl}[2]{\GL_{#1}(#2)}
\DeclareMathOperator{\charc}{char}

\newcommand{\K}{\mathcal{K}}

\makeatletter
\def\paragraph{\@startsection{paragraph}{4}%
  \z@{.3\linespacing\@plus.5\linespacing}{-\fontdimen2\font}%
  {\normalfont\bfseries}}
\makeatother

\begin{document}

\maketitle

\begin{abstract}
  We determine the conjugacy classes of semisimple elements in the
  symplectic groups $\spl{2m}{F}$, where $F$ is an arbitrary field of
  characteristic not $2$. This note was originally a letter dated 23 March, 2006, from
  G.E. Wall to Cheryl Praeger, and has been reproduced with his
  kind permission.
\end{abstract}

\section{The general problem}

The problem in question is to determine the conjugacy classes in the
symplectic groups $\spl{2m}{F}$ over a field $F$. The general method
proposed in the present section is used in Section $2$ to give a
detailed (and elementary) account of the conjugacy classes of semisimple
elements in the case where $\charc F \neq 2$.  (A \emph{semisimple}
element is one whose minimal polynomial is separable.  These include all
elements of finite order when $\charc F = 0$.)

Denote by $\mathcal{F}$ the set of all non-degenerate alternating
bilinear forms
\begin{displaymath}
  f\colon F^{2m} \times F^{2m} \rightarrow F
\end{displaymath}
and by $\mathcal{G}$ the general linear group $\gl{2m}{F}$ of all
nonsingular linear mappings
\begin{displaymath}
  T\colon F^{2m} \rightarrow F^{2m}.
\end{displaymath}
The natural permutation action of $\mathcal{G}$ on $\mathcal{F}$ is
defined by
\begin{displaymath}
  (fT)(u, v) = f(uT, vT) \text{ for all } u, v \in F^{2m}.
\end{displaymath}

The subgroup of $\mathcal{G}$ formed by those elements that fix a given
$f$ is the symplectic group $\spl{}{f}$. Since $\mathcal{G}$ acts
transitively on $\mathcal{F}$, these symplectic groups form a complete
set of conjugate subgroups of $\mathcal{G}$ (thereby justifying the
generic notation $\spl{2m}{F}$).

In order to put forms and linear mappings on the same footing, we
introduce the set of pairs
\begin{displaymath}
  \mathcal{P} = \{(f, T) \mid f \in \mathcal{F}, T \in
  \spl{}{f}\}
\end{displaymath} and define the action of $\mathcal{G}$ on $\mathcal{P}$ by
\begin{displaymath}
  (f, T)S = (fS, S^{-1}TS).
\end{displaymath}

The crucial observation is this:
\begin{obs}
  For fixed $f_0 \in \mathcal{F}$, the elements $T_1, T_2, \ldots \in
  \mathcal{G}$ are a set of representatives for the conjugacy classes of
  $\spl{}{f_0}$ if and only if $(f_0, T_1), (f_0, T_2), \ldots \in
  \mathcal{P}$ are a set of representatives for the orbits under the
  action of $\mathcal{G}$ on $\mathcal{P}$.
\end{obs}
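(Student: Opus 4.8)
The plan is to recognise the statement as an instance of a standard orbit principle: when a group $\mathcal{G}$ acts on a set $\mathcal{P}$ equipped with a $\mathcal{G}$-equivariant map onto a \emph{transitive} $\mathcal{G}$-set, the $\mathcal{G}$-orbits on $\mathcal{P}$ are classified by the orbits of a point-stabilizer on the corresponding fibre. I would first set up the bookkeeping. Let $\pi\colon\mathcal{P}\to\mathcal{F}$ be the projection $(f,T)\mapsto f$. Directly from the definition $(f,T)S=(fS,S^{-1}TS)$ one has $\pi\big((f,T)S\big)=fS=\pi(f,T)\,S$, so $\pi$ is $\mathcal{G}$-equivariant. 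Its fibre over $f_0$ is $\pi^{-1}(f_0)=\{(f_0,T)\mid T\in\spl{}{f_0}\}$; the stabilizer of $f_0$ in $\mathcal{G}$ is by definition $\spl{}{f_0}$; and for $S\in\spl{}{f_0}$ we have $(f_0,T)S=(f_0 S,\,S^{-1}TS)=(f_0,\,S^{-1}TS)$, so $\spl{}{f_0}$ acts on the fibre $\pi^{-1}(f_0)$ precisely by conjugation on the second coordinate.

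Next I would isolate the two elementary facts that drive the correspondence.
\emph{(i) Every $\mathcal{G}$-orbit on $\mathcal{P}$ meets $\pi^{-1}(f_0)$:} given $(f,T)\in\mathcal{P}$, transitivity of $\mathcal{G}$ on $\mathcal{F}$ furnishes $S\in\mathcal{G}$ with $fS=f_0$, and then $(f,T)S\in\pi^{-1}(f_0)$.
\emph{(ii) On the fibre, $\mathcal{G}$-equivalence coincides with $\spl{}{f_0}$-conjugacy:} if $(f_0,T_1)S=(f_0,T_2)$ with $S\in\mathcal{G}$, then comparing first coordinates gives $f_0 S=f_0$, i.e.\ $S\in\spl{}{f_0}$, and comparing second coordinates then gives $S^{-1}T_1 S=T_2$; conversely, any $S\in\spl{}{f_0}$ with $S^{-1}T_1 S=T_2$ satisfies $(f_0,T_1)S=(f_0,T_2)$.

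Finally I would assemble the equivalence. Assume $T_1,T_2,\dots$ represent the conjugacy classes of $\spl{}{f_0}$. By (ii) the pairs $(f_0,T_i)$ lie in pairwise distinct $\mathcal{G}$-orbits; and for an arbitrary $(f,T)\in\mathcal{P}$, (i) puts it in the orbit of some $(f_0,T')$ with $T'\in\spl{}{f_0}$, while $T'$ is $\spl{}{f_0}$-conjugate to some $T_i$, so by (ii) $(f,T)$ lies in the orbit of $(f_0,T_i)$; hence the $(f_0,T_i)$ are orbit representatives. The converse runs the same way: if the $(f_0,T_i)$ represent the $\mathcal{G}$-orbits, then by (ii) the $T_i$ are pairwise non-conjugate in $\spl{}{f_0}$, and any $T\in\spl{}{f_0}$ gives $(f_0,T)\in\mathcal{P}$, which lies in the orbit of some $(f_0,T_i)$, whence $T$ is $\spl{}{f_0}$-conjugate to $T_i$ by (ii). There is no serious obstacle here, the content being purely formal; the only points deserving a moment's attention are the equivariance of $\pi$ and the identification of the stabilizer of $f_0$ with $\spl{}{f_0}$, both immediate from the definitions, and the essential input is simply the transitivity of $\mathcal{G}$ on $\mathcal{F}$.
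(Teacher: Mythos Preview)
Your argument is correct. The paper itself records this statement only as an ``observation'' and supplies no proof, so there is nothing to compare against beyond noting that your write-up is exactly the routine verification the author leaves to the reader: the equivariance of the projection $\pi\colon\mathcal{P}\to\mathcal{F}$, transitivity of $\mathcal{G}$ on $\mathcal{F}$ to make every orbit meet the fibre over $f_0$, and the identification of $\mathcal{G}$-equivalence within that fibre with $\spl{}{f_0}$-conjugacy.
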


In short, the original conjugacy class problem can be reformulated as
one about orbits on $\mathcal{P}$. It is from this new viewpoint that
the problem will be treated from here. We now describe an alternative
way of constructing a set of orbit representatives.

\paragraph{Step 1:} We first choose representative elements $R_1, R_2,
\ldots$ from the conjugacy classes $\K_1, \K_2, \ldots$ of
$\mathcal{G}$, determining at the same time their centralisers $b_1,
b_2, \ldots$ in $\mathcal{G}$. This is a matter of standard linear
algebra.

\paragraph{Step 2:} We next determine, for each such representative
$R_k$, the set
\begin{displaymath}
  \mathcal{F}_k = \{f \in \mathcal{F} \mid fR_k = f\} = \{f \in
  \mathcal{F} \mid (f, R_k) \in \mathcal{P}\}.
\end{displaymath}
It may happen that $\mathcal{F}_k$ is empty, which simply means that no
symplectic group contains elements of $\mathcal{G}$ conjugate to
$R_k$. Assume now that $\mathcal{F}_k$ is nonempty.
\paragraph{Step 3:}
The centraliser $b_k$ acts naturally as a permutation group on
$\mathcal{F}_k$. The final step is to determine a set of representatives
$f_{k 1}, f_{k 2}, \ldots$ for the orbits of $b_k$ in this action.

The pairs
\begin{displaymath}
  (f_{1 1}, R_1), (f_{1 2}, R_1), \ldots, (f_{2 1}, R_2), (f_{2 2}, R_2), \ldots
\end{displaymath} 
so constructed form an alternative set of representatives for the orbits
under the action of $\mathcal{G}$ on $\mathcal{P}$, and thus give a new
way of determining the conjugacy classes in the symplectic group.

\section{Semisimple elements}

In order to avoid exceptional cases, we assume throughout that
\begin{equation}
  \charc F \neq 2.
\end{equation}
No further restriction is imposed for the present.

\medskip

The first task (Step 2 of \S 1) is as follows: given a nonsingular,
even-dimensional linear transformation over the field $F$, it is
required to determine the nonsingular alternating bilinear forms that it
leaves invariant.

In matrix terms, we are given $X \in \gl{2m}{F}$ and are required to
determine those $A \in \gl{2m}{F}$ such that
\begin{equation}\label{2}
  A = -A', \quad A = XAX',
\end{equation}
where $'$ denotes transpose. Notice that these conditions are equivalent
to 

(i) the form $f_A$ given by $f_A(u, v) = uAv'$ lies in $\mathcal{F}$,
and 

(ii) $X$ leaves $f_A$ invariant, so that $(f_A, X) \in \mathcal{P}$.

\medskip

The second task (Step 3 of \S 1) arises when the set of $A$ in \eqref{2}
is nonempty. The centraliser of $X$ in $\gl{2m}{F}$ acts on this set by
congruence:
\begin{equation}\label{3}
  A \mapsto YAY' \text{ for } Y \text{ such that } Y^{-1}XY = X,
\end{equation}
and it is required to determine a set of representatives
\begin{equation}\label{4}
  A_1, A_2, \ldots
\end{equation}
for the orbits. It is tacitly assumed from now on that $A$ and $X$ are
nonsingular matrices satisfying \eqref{2}.

\begin{lemma}\label{lem:1}
  $X$ is similar to $X^{-1}$.
\end{lemma}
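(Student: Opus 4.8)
The plan is to read the claim directly off the defining relations \eqref{2}. We are given a nonsingular $A$ with $A=-A'$ and $A=XAX'$. Starting from the second relation and using that $A$ is invertible, I would write $A^{-1} = (XAX')^{-1} = (X')^{-1}A^{-1}X^{-1}$, hence $X'A^{-1}X = A^{-1}$, i.e. $X' = A^{-1}X^{-1}A$. Thus $X'$ is similar to $X^{-1}$ via the matrix $A$. Since every square matrix over a field is similar to its transpose, $X$ is similar to $X'$, and therefore $X$ is similar to $X^{-1}$, which is the assertion. Concretely, combining the two similarities gives an explicit conjugator.

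Alternatively — and this is probably the cleaner route to record — I would avoid invoking "$X$ similar to $X'$" and instead argue that the existence of a \emph{nonsingular} solution $A$ to \eqref{2} already forces the similarity. From $A = XAX'$ we get $X^{-1}A = AX'$, so $X^{-1}AX'^{-1} = A$ after rearranging; transposing the relation $A=XAX'$ and using $A'=-A$ gives $-A = X(-A)X' $ again, so no new information there, but the key point is that $A$ intertwines $X'^{-1}$ with... let me instead simply use $A^{-1}XA$: from $X'A^{-1}X = A^{-1}$ we obtain $A^{-1}X = X'^{-1}A^{-1}$, hence $A^{-1}XA = X'^{-1}$. Now transpose: $A'X'A'^{-1} = X^{-1}$, and since $A' = -A$ the signs cancel, giving $AX'A^{-1} = X^{-1}$. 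Chaining, $A(A^{-1}X^{-1}A)A^{-1}$... more directly, from $A^{-1}XA = X'^{-1}$ and $AX'A^{-1}=X^{-1}$ we substitute to get $(A^2)^{-1} X (A^2) = X^{-1}$ after a short computation, exhibiting $A^2$ as an explicit conjugator carrying $X$ to $X^{-1}$.

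I do not expect any genuine obstacle here: the lemma is a one-line consequence of the invariance condition \eqref{2} together with the invertibility of $A$, and the only "external" fact that might be used is the standard result that a matrix is similar to its transpose (needed only in the first approach, and avoidable via the second). The main thing to be careful about is bookkeeping with transposes and inverses so that the antisymmetry $A' = -A$ is used correctly and the conjugating matrix is produced explicitly; this matters because later steps (the orbit analysis of Step 3) will likely want to know not just that $X \sim X^{-1}$ but that the intertwiner can be taken inside the relevant centraliser or expressed through $A$ itself.
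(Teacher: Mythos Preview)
Your first approach is correct and is precisely the paper's proof: from $A = XAX'$ one obtains $A^{-1}X^{-1}A = X'$, so $X^{-1}$ is similar to $X'$ and hence to $X$.

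Your alternative route, however, contains an error. The two relations $A^{-1}XA = (X')^{-1}$ and $AX'A^{-1} = X^{-1}$ are not independent: the second is simply the transpose of the first combined with $A' = -A$, so substituting one into the other yields only a tautology, not the claimed identity $(A^2)^{-1}X(A^2) = X^{-1}$. Indeed that identity fails in general: for $2m = 2$ and $A = \left(\begin{smallmatrix} 0 & 1 \\ -1 & 0 \end{smallmatrix}\right)$ one has $A^2 = -I_2$, so conjugation by $A^2$ is trivial, yet $X = \mathrm{diag}(2,\tfrac12)$ satisfies \eqref{2} without being equal to $X^{-1}$. The standard fact that a matrix is similar to its transpose (or some equivalent device) really is needed here, and this is exactly what the paper invokes; the explicit intertwiner $T$ with $R' = T^{-1}R^{-1}T$ that appears later in \eqref{16} is introduced separately rather than extracted from this lemma.
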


\begin{proof}
  By \eqref{2}, $A^{-1}X^{-1}A = X'$, so that $X^{-1}$ is similar to
  $X'$ and hence to $X$.
\end{proof}

\begin{notation}
  If $f(t)$ is a monic polynomial with $f(0) \neq 0$ then $f^-(t)$
  denotes the monic polynomial whose roots are the reciprocals of those
  of $f(t)$.  Let $c_Y(t)$ denote the characteristic polynomial of a
  square matrix $Y$.
\end{notation}

\begin{defn}
  An \emph{elementary divisor} of a square matrix $Y$ is a divisor of
  the minimal polynomial of $Y$ of the form $f(t) = g(t)^\lambda$, where
  $g(t)$ is monic and irreducible, which is related to the rational
  canonical form of $Y$. 
\end{defn}

Later we shall assume that $Y$ is semisimple.
In this case, irreducible factors of the minimal polynomial of $Y$ occur
with multiplicity 1.


\begin{cor}
  If $f(t)$ is an elementary divisor of $X$, then $f^-(t)$ is an
  elementary divisor of the same multiplicity.
\end{cor}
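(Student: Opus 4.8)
The plan is to deduce the corollary directly from Lemma~\ref{lem:1} together with a description of how the operation $f \mapsto f^-$ interacts with elementary divisors. First I would record the elementary mechanics of the reciprocal operation: if $f(t)$ is monic of degree $n$ with $f(0) \neq 0$, then $f^-(t) = f(0)^{-1}\, t^n f(1/t)$, and from this formula one sees at once that $f \mapsto f^-$ is an involution on the set of monic polynomials with nonzero constant term, that it is multiplicative, $(gh)^- = g^-h^-$, and that it preserves degree. Consequently it carries monic irreducible polynomials (other than $t$, which is automatically excluded here) to monic irreducible polynomials, and satisfies $(g^\lambda)^- = (g^-)^\lambda$.

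Next I would observe that, since $X$ is nonsingular, $t$ does not divide the minimal polynomial of $X$, so every elementary divisor $g(t)^\lambda$ of $X$ has $g(0) \neq 0$; thus $f^-$ is defined for every elementary divisor $f$ of $X$. I then claim that the elementary divisors of $X^{-1}$ are precisely the polynomials $f^-$, where $f$ runs over the elementary divisors of $X$, each occurring with the same multiplicity. By the theory of the rational canonical form it suffices to verify this for a single cyclic block: if $C$ is the companion matrix of $g(t)^\lambda$, then $C$ is nonsingular with minimal polynomial equal to its characteristic polynomial, namely $g^\lambda$; passing to inverses replaces this polynomial by its reciprocal, so the minimal polynomial and the characteristic polynomial of $C^{-1}$ are both $(g^\lambda)^- = (g^-)^\lambda$, which forces $C^{-1}$ to be cyclic and hence similar to the companion matrix of $(g^-)^\lambda$. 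Since $g^-$ is monic irreducible, this last polynomial is a genuine elementary divisor, and summing over the blocks in the rational canonical form of $X$ establishes the claim.

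Finally, Lemma~\ref{lem:1} asserts that $X$ and $X^{-1}$ are similar, hence have identical lists of elementary divisors counted with multiplicity. Combining this with the preceding paragraph, the involution $f \mapsto f^-$ permutes the elementary divisors of $X$ while preserving their multiplicities; in particular, if $f(t)$ is an elementary divisor of $X$ of multiplicity $\mu$, then $f^-(t)$ is an elementary divisor of $X$ of the same multiplicity $\mu$, which is exactly the assertion of the corollary.

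I expect the only point needing any care — and it is routine — is the bookkeeping in the middle paragraph: confirming that passing to the inverse of a matrix replaces its minimal (and characteristic) polynomial by the reciprocal polynomial, and that $g^-$ inherits irreducibility from $g$. Neither of these is a genuine obstacle, so the corollary should follow in a few lines.
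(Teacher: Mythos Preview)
Your argument is correct and follows exactly the line the paper intends: the corollary is stated immediately after Lemma~\ref{lem:1} with no separate proof, so the paper is treating it as a direct consequence of the similarity $X\sim X^{-1}$, which is precisely the spine of your proposal. You have simply written out the routine bookkeeping (multiplicativity and involutivity of $f\mapsto f^-$, and the effect of inversion on the elementary divisors of a cyclic block) that the paper leaves implicit.
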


Suppose that $X$ has block diagonal form, and $A$ has corresponding
block matrix form:
\begin{equation}\label{5}
  X = \left( \begin{array}{ccc}
      X_1 & 0 & \cdots \\
      0 & X_2 & \\
      \vdots & & \ddots \end{array} \right), 
  \quad
  A = \left( \begin{array}{ccc}
      A_{11} & A_{12} & \cdots \\
      A_{21} & A_{22} & \\
      \vdots & & \ddots \end{array} \right).
\end{equation}

Then, by \eqref{2},
\begin{equation}\label{6}
  X_i A_{ij} X_j' = A_{ij} = -A'_{ji}  \text{ for all } i, j.
\end{equation}

Hence $A_{ij} X_j' = X_i^{-1}A_{ij}$ and so, more generally,
\begin{equation}\label{7}
  A_{ij}f(X_j)' = f(X_i^{-1})A_{ij}
\end{equation}
for any polynomial $f(t)$.
\begin{notation}
Let $c_Y(t)$ denote the characteristic polynomial of the square matrix $Y$.
\end{notation}

\begin{lemma}\label{lem:2}
  If
  \begin{equation}\label{8}
    (c_{X_i^{-1}}(t), c_{X_j}(t)) = 1,
  \end{equation}
  then $A_{ij} = 0$.
\end{lemma}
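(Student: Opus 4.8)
The plan is to exploit the intertwining relation \eqref{7}, which says that the rectangular block $A_{ij}$ intertwines the action of $f(X_j)'$ on the right with that of $f(X_i^{-1})$ on the left, for every polynomial $f(t)$. The idea is to choose $f$ cleverly so that one of these two operators becomes $0$ while the other becomes invertible; then \eqref{7} forces $A_{ij}=0$.

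First I would invoke the hypothesis \eqref{8} that $c_{X_i^{-1}}(t)$ and $c_{X_j}(t)$ are coprime. By the Euclidean algorithm in $F[t]$ there exist polynomials $p(t), q(t)$ with $p(t)\,c_{X_i^{-1}}(t) + q(t)\,c_{X_j}(t) = 1$. Set $f(t) = q(t)\,c_{X_j}(t) = 1 - p(t)\,c_{X_i^{-1}}(t)$. Then, applying \eqref{7} with this $f$, I would compute the two sides: on the right, $f(X_j) = q(X_j)\,c_{X_j}(X_j) = 0$ by the Cayley--Hamilton theorem, so $f(X_j)' = 0$ as well; on the left, $f(X_i^{-1}) = I - p(X_i^{-1})\,c_{X_i^{-1}}(X_i^{-1}) = I$, again by Cayley--Hamilton applied to $X_i^{-1}$. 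Substituting into \eqref{7} gives $0 = A_{ij}\,f(X_j)' = f(X_i^{-1})\,A_{ij} = A_{ij}$, which is exactly the claim.

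The only point needing care is the direction in which the polynomials act and the transposes in \eqref{7}: I must make sure that ``kill $f(X_j)'$ on the right'' corresponds to using $c_{X_j}$, and ``make $f(X_i^{-1})$ the identity on the left'' corresponds to using $c_{X_i^{-1}}$, rather than the other way around. Since $f(Y)' = f(Y')$ for any polynomial $f$, and $c_{X_j}(X_j)=0$ gives $c_{X_j}(X_j')=0$, the right-hand factor does vanish as claimed; and there is no genuine obstacle here beyond bookkeeping. It is worth noting that this argument does not use that $X_i, X_j$ are nonsingular or that $A$ is alternating — only the polynomial identity \eqref{7} and coprimality — which is why the lemma is the natural tool for reducing the problem in \eqref{5} to the ``primary component'' blocks where $c_{X_i^{-1}}$ and $c_{X_j}$ share a factor, i.e. where $X_j$ and $X_i^{-1}$ have a common elementary divisor.
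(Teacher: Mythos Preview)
Your proof is correct and follows essentially the same approach as the paper: both exploit the intertwining relation \eqref{7} together with coprimality and Cayley--Hamilton to force $A_{ij}=0$. The only cosmetic difference is that the paper simply takes $f(t)=c_{X_i^{-1}}(t)$, so the left-hand side of \eqref{7} vanishes directly, and then uses coprimality to argue that $c_{X_i^{-1}}(X_j)$ is nonsingular and can be cancelled on the right; your B\'ezout construction packages the same invertibility argument into the choice of $f$ itself.
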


\begin{proof}
  Taking $f(t) = c_{X_i^{-1}}(t)$ in \eqref{7}, we get $A_{ij}
  c_{X_i^{-1}}(X_j)' = 0$. However, in view of \eqref{8},
  $c_{X_i^{-1}}(X_j)$ is nonsingular, whence $A_{ij} = 0$.
\end{proof}

Elementary divisors $f_1(t)$, $f_2(t)$ of $X$ are powers of irreducible
monic polynomials $g_1(t)$, $g_2(t)$. We say that $f_1(t)$ and $f_2(t)$
are \emph{related} if $g_2(t) = g_1(t)$ or $g_1^-(t)$.

By the theory of elementary divisors, we may choose the blocks $X_i$ in
\eqref{5} in such a way that elementary divisors $f_1(t)$, $f_2(t)$ of
$X$ are elementary divisors of the same $X_i$ if, and only if, they are
related. With such a choice of the $X_i$, Lemma~\ref{lem:2} shows that
$A$ has corresponding block diagonal form
\begin{displaymath}
  \left( \begin{array}{ccc}
      A_{11} & 0 & \cdots \\
      0 & A_{22} & \\
      \vdots & & \ddots \end{array} \right).
\end{displaymath}
In this way the original problem for $X$ is reduced to the same problem
for the individual blocks $X_i$. We may therefore assume:

\begin{assump}\label{assump}
  There exists a monic irreducible polynomial $g(t) \neq t$ such that
  every elementary divisor of $X$ is a power of $g(t)$ or $g^-(t)$.
\end{assump}

\paragraph{Case 1:} $\mathbf{g(t) \neq g^-(t)}$.
We may assume in \eqref{5} that
\begin{displaymath}
  X = \left( \begin{array}{cc} X_1 & 0 \\ 0 & X_2 \end{array}
  \right),
\end{displaymath}
where $c_{X_1}(t)$, $c_{X_2}(t)$ are powers of $g(t)$, $g^-(t)$
respectively. By Lemma~\ref{lem:1}, $X_2$ is similar to $X_1^{-1}$. We
may therefore assume further that
\begin{equation}\label{9}
  X = \left( \begin{array}{cc}
      X_1 & 0 \\
      0 & (X_1^{-1})' \end{array} \right), \quad
  A = \left( \begin{array}{cc}
      A_{11} & A_{12} \\
      -A_{12}' & A_{22} \end{array} \right).
\end{equation}

By Lemma~\ref{lem:2}, $A_{11} = A_{22} = 0$. Also, by \eqref{6},
$X_1A_{12}X_1^{-1} = A_{12}$, i.e. $X_1$ commutes with $A_{12}$.

Now let
\begin{equation}\label{10}
  Y = \left( \begin{array}{cc}
      A_{12} & 0 \\
      0 & I_m \end{array} \right), \quad
  J = \left( \begin{array}{cc}
      0 & I_m \\
      -I_m & 0 \end{array} \right),
\end{equation}
where $I_m$ is the $m \times m$ unit matrix, and $m = \mathrm{deg} \,
c_{X_1}(t)$. Then
\begin{displaymath}
  Y^{-1}XY = X, \quad YJY' = A,
\end{displaymath}
showing that, with $X$ as in \eqref{9}, there is just one orbit under
the action \eqref{3}, represented by the matrix $J$ in
\eqref{10}. Expressed differently, the conjugacy class of $X$ in
$\gl{2m}{F}$ intersects each symplectic subgroup in a single conjugacy
class of the latter.

\bigskip

\paragraph{Case 2:} $\mathbf{g(t) = g^-(t)}$.

In general, the elementary divisors of $X$ may be arbitrary powers of
$g(t)$ with arbitrary multiplicities. We now impose the condition that
$X$ be semisimple:

\begin{assump}
  $X\in\gl{2m}{F}$ has the single irreducible elementary divisor
  $g(t)\ne t$ with multiplicity $n$.
\end{assump}

We may therefore assume that
\begin{equation}\label{11}
  X = \mathrm{diag}(\underset{n}{\underbrace{R, \ldots, R}}),
\end{equation}
where
\begin{equation}\label{12}
  c_R(t) = g(t).
\end{equation}

Since $c_R(t)$ is irreducible, the matrices
\begin{displaymath}
  f(R) \quad (f(t) \in F[t])
\end{displaymath}
form a field
\begin{displaymath}
  K \cong F[t]/g(t)F[t]
\end{displaymath}
and every matrix that commutes with $R$ is in $K$. It follows that the
centralizer of $X$ in $\gl{2m}{F}$ consists of the nonsingular $n \times
n$ block matrices
\begin{equation}\label{13}
  B = (f_{ij}(R))_{i,j = 1, \ldots, n},
\end{equation}
for polynomials $f_{ij}(t) \in F(t)$.  These matrices form a group that
we may identify with $\gl{n}{K}$.

If $\mathrm{deg} \, g(t) = 1$, then $g(t) = t \pm 1$ (since $g(t) =
g^-(t)$), the matrix $R$ is a $1 \times 1$ matrix $(\pm 1)$ and $X = \pm
I_{2m}$. The centralizer of $X$ is $\gl{2m}{F}$ and the nonsingular $2m
\times 2m$ skew-symmetric matrices form a single orbit under its
action. We assume from now on that $\mathrm{deg} \, g(t) \geq 2$.

Let
\begin{equation}\label{14}
  A = (A_{ij})_{i,j = 1, \ldots, n}
\end{equation}
be the block form of $A$ corresponding to \eqref{11}. The equation $A =
XAX'$ in \eqref{2} is then equivalent to the set of equations
\begin{equation}\label{15}
  RA_{ij}R' = A_{ij}.
\end{equation}
Now, since $g(t) = g^-(t)$, $R$ is similar to $R^{-1}$ and so
\begin{equation}\label{16}
  R' = T^{-1}R^{-1}T
\end{equation}
for some $T \in \gl{2m/n}{F}$. Thus, we may rewrite \eqref{15} as
\begin{displaymath}
  R(A_{ij}T^{-1}) = (A_{ij}T^{-1})R,
\end{displaymath}
whence $A$ has the form
\begin{equation}\label{17}
  A = (f_{ij}(R)T).
\end{equation}
We write this equation as
\begin{equation}\label{18}
  A = B{\mathcal T}\quad
  \text{where $B=(f_{ij}(R))$ and }{\mathcal T}
  =\mathrm{diag}(\underset{n}{\underbrace{T, \ldots, T}}).
\end{equation}

Now, the mapping $K \rightarrow K$ defined by
\begin{displaymath}
  \phi(R) \mapsto \phi(R^{-1}) \quad (\phi(t) \in F[t])
\end{displaymath}
is a field automorphism of $K$ of order $2$, since $R\ne R^{-1}$. For a
matrix
\begin{displaymath}
  Y = (\phi_{ij}(R)) \in \gl{n}{K},
\end{displaymath}
we define
\begin{displaymath}
  Y^{\ast} = (\phi_{ij}(R^{-1}))^{\mathrm{tr}},
\end{displaymath}
where $\mathrm{tr}$ denotes transpose qua $n \times n$ matrix over $K$
and \emph{not} qua $2m \times 2m$ matrix over~$F$,
i.e. $Y^*=(\phi_{ji}(R^{-1}))$.  Accordingly, $Y$ is called
\emph{Hermitian} if $Y^\ast = Y$ and $Y_1, Y_2$ are said to be
\emph{$\ast$-congruent} if $Y_2 = C Y_1 C^\ast$ for some $C \in
\gl{n}{K}$.

The following two results are proved by routine calculations:
\begin{lemma}
  \begin{enumerate}
  \item[(i)] If $A=B{\mathcal T}$ as in~\eqref{18} and $Y \in
    \gl{n}{K}$, then $YAY'= YBY^\ast{\mathcal T}$.
  \item[(ii)] If $A=B{\mathcal T}$ as in~\eqref{18} and $T$ is
    skew-symmetric, then $A$ is skew-symmetric if, and only if, $B$ is
    Hermitian.
  \end{enumerate}
\end{lemma}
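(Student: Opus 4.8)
The plan is to reduce both parts to a single block identity and then let the algebra fall out of it. First I would establish, for every $Y=(\phi_{ij}(R))\in\gl nK$, the formula
\begin{equation*}
  Y' = {\mathcal T}^{-1}\,Y^{\ast}\,{\mathcal T},
\end{equation*}
where $'$ is the transpose of $Y$ viewed as a $2m\times 2m$ matrix over $F$. The starting point is~\eqref{16}: from $R'=T^{-1}R^{-1}T$ one gets $(R^{k})'=T^{-1}R^{-k}T$ for all $k$, and taking $F$-linear combinations, $\phi(R)'=T^{-1}\phi(R^{-1})T$ for every $\phi(t)\in F[t]$. Then I would compare blocks: the $(i,j)$ block of $Y'$ is the $F$-transpose of the $(j,i)$ block of $Y$, namely $\phi_{ji}(R)'=T^{-1}\phi_{ji}(R^{-1})T$; and since ${\mathcal T}=\mathrm{diag}(T,\dots,T)$ while $Y^{\ast}=(\phi_{ji}(R^{-1}))$ by definition, this is exactly the $(i,j)$ block of ${\mathcal T}^{-1}Y^{\ast}{\mathcal T}$. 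That proves the identity.

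Granting it, part~(i) is immediate: with $A=B{\mathcal T}$ as in~\eqref{18} and $Y\in\gl nK$,
\begin{equation*}
  YAY' = Y\,B{\mathcal T}\,{\mathcal T}^{-1}Y^{\ast}{\mathcal T} = YBY^{\ast}{\mathcal T}.
\end{equation*}
(It is worth noting that $Y^{\ast}$, and hence $YBY^{\ast}$, again has the form $(f_{ij}(R))$, because $R^{-1}$ is a polynomial in $R$; so $YAY'$ is again of the shape~\eqref{17}, in accordance with the action~\eqref{3}.) For part~(ii), I would apply the same block identity to $Y=B$ itself --- legitimate since $B=(f_{ij}(R))$ lies in the centraliser of $X$ and is nonsingular because $A$ is --- to get $B'={\mathcal T}^{-1}B^{\ast}{\mathcal T}$. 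As $T$ is skew-symmetric, ${\mathcal T}'=-{\mathcal T}$, so
\begin{equation*}
  A' = (B{\mathcal T})' = {\mathcal T}'B' = -{\mathcal T}\,{\mathcal T}^{-1}B^{\ast}{\mathcal T} = -B^{\ast}{\mathcal T}.
\end{equation*}
Comparing with $A=B{\mathcal T}$ and cancelling the invertible ${\mathcal T}$ on the right, $A'=-A$ holds if and only if $B^{\ast}=B$, that is, $B$ is Hermitian.

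I expect the only real point requiring care to be the bookkeeping of the two transposes: the $n\times n$-over-$K$ transpose built into the definition of $Y^{\ast}$ must be kept strictly distinct from the $2m\times 2m$-over-$F$ transpose $'$, and one must track how conjugation by ${\mathcal T}$ converts one into the other by way of the order-two automorphism $\phi(R)\mapsto\phi(R^{-1})$ of $K$. Once the displayed identity $Y'={\mathcal T}^{-1}Y^{\ast}{\mathcal T}$ is in hand there is nothing further to do --- which is precisely why these are ``routine calculations'' --- and the conceptual upshot, that $'$ becomes the Hermitian adjoint $\ast$ after the coordinate change absorbing ${\mathcal T}$, is exactly what will let the remainder of Case~2 be handled by the theory of Hermitian forms over $K$.
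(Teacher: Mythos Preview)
Your proof is correct and rests on the same computation as the paper's: using \eqref{16} to convert $\phi(R)'$ into $T^{-1}\phi(R^{-1})T$ block by block. You package this once as the identity $Y'={\mathcal T}^{-1}Y^{\ast}{\mathcal T}$ and then read off both parts, whereas the paper carries out the conversion inline separately in (i) and (ii); the underlying algebra is identical.
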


\begin{proof}
  \noindent {\emph{(i)}} We may write $Y$ as a block matrix
  $(\phi_{ij}(R))$ for some $\phi_{ij}(t) \in F[t]$. Then
  \begin{displaymath}
    \begin{array}{rl}
      YAY' & =  (\phi_{ij}(R))(f_{ij}(R)T)(\phi_{ij}(R))' \\& = 
      (\sum_{\lambda, \mu} \phi_{i \lambda}(R) f_{\lambda \mu}(R)T \phi_{j
        \mu}(R')) \\ & = 
      (\sum_{\lambda,\mu} \phi_{i \lambda}(R) f_{\lambda \mu}(R) \phi_{j
        \mu}(R^{-1})T), \\
      & = YBY^*{\mathcal T}\end{array}
  \end{displaymath} by \eqref{16}.

  \noindent {\emph{(ii)}} By \eqref{18}, $A'={\mathcal T}'B'$, and since
  $T$ is skew-symmetric we have
  \begin{displaymath}
    \begin{array}{rll}
      -A' & = (-T'f_{ji}(R'))
      = (Tf_{ji}(R)') \quad \quad & \text{ (as $T' = -T$)}\\
      & = (f_{ji}(R^{-1})T) \quad \quad & \text{ (by \eqref{16}).}
    \end{array}
  \end{displaymath}
  
  So $A = -A'$ if, and only if, $f_{ij}(R)T = f_{ji}(R^{-1})T$ for all
  $i, j$. Since $T$ is invertible, this holds if, and only if,
  \begin{displaymath}
    B = (f_{ij}(R))  = (f_{ji}(R^{-1}))= B^\ast.
  \end{displaymath}
\end{proof}

Thus, \emph{provided that $T$ can be chosen skew-symmetric}, our
conjugacy class problem in the present case reduces to a classification
problem for Hermitian forms over the extension $K$ of $F$. Since $T$ can
obviously be replaced in \eqref{17} by $h(R)T$, where $h(R)$ is any
nonzero (and hence nonsingular) element of $K$, the following result
shows that such a choice of $T$ is always possible.

\begin{lemma}\label{lem:3}
  If $g(t) \neq t \pm 1$, then there exists a nonzero element $h(R)$ of
  $K$ such that $h(R)T$ is skew-symmetric.
\end{lemma}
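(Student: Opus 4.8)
The plan is to pin $T$ down up to its intrinsic ambiguity, to determine how the matrix transpose acts on it, and then to solve an explicit linear equation over $K$. Throughout I write $\overline{k}$ for the image of $k \in K$ under the order-$2$ field automorphism $h(R) \mapsto h(R^{-1})$ of $K$.

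First I would note that if $S \in \gl{2m/n}{F}$ satisfies $S^{-1}R^{-1}S = R'$ — the relation \eqref{16} obeyed by $T$ — then $ST^{-1}$ commutes with $R^{-1}$, hence with $R$, and so $ST^{-1} \in K^{\times}$. Transposing \eqref{16} shows that $T'$ does satisfy this relation, so $T' = \nu(R)\,T$ for some nonzero $\nu(R) \in K^{\times}$. Transposing a second time and using $T'' = T$ together with $(\nu(R))' = \nu(R')$ and \eqref{16} gives $\nu(R)\,\overline{\nu(R)} = 1$, i.e.\ $\overline{\nu(R)} = \nu(R)^{-1}$.

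For any $h(R) \in K$ a short computation — transpose $h(R)T$, move $(h(R))' = h(R')$ past $T$ using \eqref{16}, then substitute $T' = \nu(R)T$ — yields $(h(R)T)' = \nu(R)\,\overline{h(R)}\,T$, the three factors all lying in $K$ and so commuting. Hence $h(R)T$ is skew-symmetric if and only if $h(R) = -\,\nu(R)\,\overline{h(R)}$. I would solve this with the ansatz $h(R) = c - \nu(R)\,\overline{c}$, $c \in K$: using $\overline{\nu(R)} = \nu(R)^{-1}$ one verifies in a line that $-\,\nu(R)\,\overline{h(R)} = h(R)$ for every $c$, so it only remains to choose $c$ with $h(R) \neq 0$. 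The map $c \mapsto c - \nu(R)\,\overline{c}$ is $F$-linear, and were it identically zero then $c = 1$ would force $\nu(R) = 1$, whence $\overline{c} = c$ for all $c \in K$ — contradicting the fact that $h(R) \mapsto h(R^{-1})$ is a nontrivial automorphism. This is the only place the hypothesis $g(t) \neq t \pm 1$ is used: it ensures $R \neq R^{-1}$, so that $k \mapsto \overline{k}$ is not the identity. Thus some $c$ gives a nonzero $h(R) \in K^{\times}$, and for that $c$ the matrix $h(R)T$ is skew-symmetric.

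I do not anticipate a genuine obstacle. The calculations — transposing \eqref{16} twice and checking the ansatz — are routine, in the same vein as the preceding lemma; the only point of substance is the identity $T' = \nu(R)\,T$ with $\nu(R)\,\overline{\nu(R)} = 1$, which reduces everything to the elementary fact (a special case of Hilbert's Theorem~90) that $x = -\,\nu\,\overline{x}$ has a nonzero solution whenever $\nu\,\overline{\nu} = 1$ — here established directly by writing the solution down. The only care needed is, once more, to keep the field automorphism of $K$ and the $(2m/n)$-dimensional transpose apart.
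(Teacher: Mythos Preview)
Your argument is correct and shares its opening move with the paper: both begin by transposing \eqref{16} to obtain $T' = \nu(R)\,T$ for some $\nu(R) \in K^{\times}$. From there the two diverge slightly. You go on to establish the norm condition $\nu(R)\,\overline{\nu(R)} = 1$ and then solve $h = -\nu\,\overline{h}$ by the Hilbert~90 ansatz $h = c - \nu\,\overline{c}$, arguing abstractly that this map cannot vanish identically. The paper instead bypasses the norm computation entirely: it simply observes that $T$ and $RT$ cannot both be symmetric (else $R = R^{-1}$), so for whichever one --- call it $S$ --- is asymmetric, the difference $S - S'$ is nonzero, skew-symmetric, and already of the form $h(R)T$. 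In effect the paper just tests your ansatz at $c = 1$ and $c = R$ and checks by hand that one of them succeeds. Your route is a line or two longer but makes the underlying structure (a norm-one twist and its associated Hilbert~90 problem) transparent; the paper's is quicker and more \emph{ad hoc}.
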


\begin{proof} 
  \eqref{16} can be written $RTR' = T$. Transposing, we get $RT'R' =
  T'$, whence
  \begin{equation}\label{19}
    R' = (T')^{-1}R^{-1}(T').
  \end{equation}
  Comparing with \eqref{16}, we deduce that $T'T^{-1}$ commutes with
  $R$, whence
  \begin{displaymath}
    T' = f(R)T
  \end{displaymath}
  for some $f(R) \in K$.

  Now, if both $RT$ are $T$ were symmetric, we would have
  \begin{displaymath}
    RT = (RT)' = T'R' = TR' = R^{-1}T
  \end{displaymath}
  by \eqref{16}.  But this implies that $R = R^{-1}$ and so $R^2 = I$,
  contrary to the assumption that $g(t) \neq t \pm 1$.

  It follows that at least one of $T$ and $RT$ --- let us say $T$ itself
  --- is \emph{not} symmetric. But then
  \begin{displaymath}
    T - T' = (1-f(R))T
  \end{displaymath}
  is nonzero and skew-symmetric, as required.
\end{proof}

\section{Summary}

We wish to determine a complete, irredundant set of conjugacy class
representatives for the semisimple elements of the symplectic groups
$\spl{2m}{F}$, where $F$ is an arbitrary field of characteristic not
$2$. To do so, it suffices to consider the elements whose characteristic
polynomial is divisible only by powers of $g(t)$ and $g^-(t)$, for some
irreducible $g(t) \in F[t]$.  We first choose a set $R_1, R_2, \ldots,
R_k$ of representatives of conjugacy classes of such elements in
$\gl{2m}{F}$, and discard any such $R_i$ that do not preserve a
symplectic form.

In Case 1 of Section~2, $g(t) \neq g^-(t)$. Then the
$\gl{2m}{F}$-conjugacy class of $R_i$ meets $\spl{2m}{F}$ in a unique
conjugacy class.

In Case 2, $g(t) = g^-(t)$. The only $R_i$ for which $g(t)$ has degree
$1$ are $\pm I_{2m}$.  Let $R_i = X$ be as in \eqref{11}, where $R$ has
characteristic polynomial $g(t)$ of degree greater than $1$, and let $K$
denote the field isomorphic to the set of all polynomials in $R$.  Then
each congruence class of Hermitian forms on $K^{2m/n}$ corresponds to an
$\spl{2m}{F}$-conjugacy class of matrices that are similar to $R_i$. In
particular, if $F$ is finite then there is only one such class, and so
once again the $\gl{2m}{F}$-conjugacy class of $R_i$ meets $\spl{2m}{F}$
in a unique conjugacy class.

\end{document}